\renewcommand{\aa}{\mathbf{a}}
\newcommand{\bb}{\mathbf{b}}
\newcommand{\cc}{\mathbf{c}}
\newcommand{\dd}{\mathbf{d}}
\newcommand{\ee}{\mathbf{e}}
\newcommand{\uu}{\mathbf{u}}
\newcommand{\vv}{\mathbf{v}}
\newcommand{\ww}{\mathbf{w}}
\newcommand{\xx}{\mathbf{x}}
\newcommand{\yy}{\mathbf{y}}
\newcommand{\R}{\mathds{R}}
\newcommand{\PP}{{\cal P}}
\newtheorem{definition}{Definition}
\newtheorem{lemma}[definition]{Lemma}
\newtheorem{theorem}[definition]{Theorem}
\newtheorem{fact}[definition]{Fact}
\title{Large Shadows from Sparse Inequalities\thanks{The project CG Learning acknowledges the financial support of the Future and Emerging Technologies (FET) programme within the Seventh Framework Programme for Research of the European Commission, under FET-Open grant number: 255827}}
\author{Bernd G\"artner \qquad Christian Helbling \\Institute of Theoretical Computer Science \\
  ETH Zurich \\ Switzerland \and Yoshiki Ota \qquad
  Takeru Takahashi \\ Graduate School of Information Sciences \\
  Tohoku University \\ Japan}
\begin{document}

\maketitle

\begin{abstract}
  The $d$-dimensional Goldfarb cube is a polytope with the property
  that all its $2^d$ vertices appear on some \emph{shadow} of it
  (projection onto a $2$-dimensional plane). The Goldfarb cube is the
  solution set of a system of $2d$ linear inequalities with at most
  $3$ variables per inequality. We show in this paper that the
  $d$-dimensional Klee-Minty cube --- constructed from inequalities
  with at most $2$ variables per inequality --- also has a shadow with
  $2^d$ vertices. In contrast, with one variable per inequality, the
  size of the shadow is bounded by $2d$.
\end{abstract}

\section{Introduction}
The study of shadows of polytopes goes back to 1955, when Gass and
Saaty introduced a variant of the simplex method
for solving linear programs whose objective function linearly depends
on a real parameter $\lambda$~\cite{Gass:1955hg}. For every fixed
value of $\lambda$, the problem can be treated as an ordinary linear
program, but the approach of Gass and Saaty was to compute the optimal
value as an explicit (piecewise linear) function of $\lambda$, and
afterwards simply look up the solution for any desired parameter value
$\lambda$.  50 years later, this approach was rediscovered in the
machine learning community, in the context of support vector machines
(parameterized quadratic programs)~\cite{Hastie:2004uj}.

\paragraph{The Gass-Saaty method.}
Let us assume for the discussion here that the feasible region of the
linear program is a simple polytope $\PP\subseteq\R^d$ with $n$ facets
(for background on polytopes and this geometric view of linear
programming, we refer to Ziegler's book~\cite[Section
3.2]{z-lop-95}). We also assume that the objective function is of the
form $f_{\lambda}(\xx)=\cc^T\xx + \lambda\dd^T\xx$, where
$\cc,\dd\in\R^d$ are linearly independent and generic (non-constant on
every edge of $P$). Then the output of the Gass-Saaty method is a
sequence of vertices $\vv_0,\vv_1,\ldots, \vv_{M-1}$ of $\PP$, along
with a sequence of real values
$-\infty=\lambda_0<\lambda_1<\cdots<\lambda_M=\infty$, with the
following property:
\begin{quotation}
  $\forall k\in\{0,1,\ldots,M\}$: $\vv_k$ maximizes
  $f_{\lambda}$ over $\PP$ for $\lambda\in[\lambda_k,\lambda_{k+1}]$.
\end{quotation}
Hence, for $\lambda\in[\lambda_k,\lambda_{k+1}]$, the optimal value of
the linear program is $\cc^T\vv_k+\lambda \dd^T\vv_k$, so we indeed
get the optimal objective function value as a pieceweise linear
function in $\lambda$, with $M$ ``bends''.

The two sequences are computed as follows: by solving an ordinary
linear program, we initally find the vertex $\vv_0$ that maximizes
$-\dd^T\xx$, corresponding to parameter value $\lambda_0=-\infty$. Now
suppose that we have already computed $\vv_k$ and a value of
$\lambda_k$ for which $\vv_k$ is optimal. Starting from $\lambda_k$,
we grow $\lambda$ until we have a (unique) neighboring vertex
$v_{k+1}$ with $f_{\lambda}(\vv_k)=f_{\lambda}(\vv_{k+1})$. The
corresponding value of $\lambda$ will be $\lambda_{k+1}$. If $\lambda$
can grow indefinitely without reaching the former equality, we have
$k={M-1}$ and set $\lambda_{M}=\infty$. Algebraically, the
computations are very simple, if the \emph{simplex method} is used. At
value $\lambda_k$, we have a certificate of optimality of $\vv_k$, in
the form of nonpositive \emph{reduced costs} that are also linear
functions in $\lambda$. Hence we can compute $\lambda_{k+1}$ as the
next higher value for which some reduced cost coefficient is about to
become positive. At this point, a single \emph{pivot step} will yield
$\vv_{k+1}$. For details, we refer to the original article by Gass and
Saaty~\cite{Gass:1955hg}.

\paragraph{The shadow vertex method.}
The Gass-Saaty method can also be used to solve an ordinary linear
program with objective function $\cc^T\xx$, given some initial vertex
$\vv_0$. For this, we compute an auxiliary objective function $\dd$
which is uniquely minimized by $\vv_0$ (this is easy); then we run the
Gass-Saaty method until we get to an interval
$[\lambda_k,\lambda_{k+1}]$ containing $0$. The corresponding vertex
$\vv_k$ maximizes $\cc^T\xx$ over $\PP$.

\paragraph{Shadows of polytopes.}
It is clear that the efficiency of the Gass-Saaty method critically
depends on the number of bends $M$, equivalently the number of
vertices in the sequence $\vv_0,\vv_1,\ldots, \vv_{M-1}$. Each of them
turns out to be a \emph{shadow vertex}, meaning that we still ``see
it'' when we project the polytope $P$ to the 2-dimensional plane
spanned by the vectors $\cc$ and $\dd$ (see Lemma~\ref{lem:shadow} below).

In many cases, the number of shadow vertices is small. For example,
when we project the unit cube $[0,1]^d$ to any 2-dimensional plane, we
obtain a polygon with at most $2d$ vertices
(see Theorem~\ref{thm:zonotope} below). In the worst case, however, shadows can
be large. Murty (in the dual setting) was the first to construct
shadows whose size is exponential in the dimension of the
polytope~\cite{Murty1980}. A more explicit primal construction was
later provided by Goldfarb in form of a $d$-dimensional defomed cube,
with all its $2^d$ vertices appearing on some 2-dimensional
shadow~\cite{Goldfarb1,Goldfarb2}. A further simplification of the
construction is due to Amenta and Ziegler~\cite[Section 4.3]{AZ}). The
Goldfarb cube also serves as the starting point for an exponential
lower bound on the complexity of a support vector machine's 
\emph{regularization path}~\cite{gjm-elbpcrp-12}.

\paragraph{The effect of sparsity.} Let us now consider an explicit
inequality description the polytope $\PP$, 
\[
\PP =\{\xx\in\R^d: A\xx \leq \bb\}, \quad A\in\R^{n\times d}, b\in\R^n.
\]
We call this description \emph{$t$-sparse} if
every row of the matrix $A$ has at most $t$ nonzero entries. In other
words, if every inequality contains at most $t$ variables. The
question that we ask in this paper is the following: what is the
effect of sparsity on the size of the 2-dimensional shadows of $\PP$?

The Goldfarb cube in the version of Amenta and Ziegler~\cite[Section
4.3]{AZ}) has a $3$-sparse inequality description, meaning that
exponentially large shadows require at most $3$ variables per
inequality. On the other hand, any polytope with a $1$-sparse
inequality description is an axis-parallel box, with 2-dimensional
shadows of size $2d$ at most (Theorem~\ref{thm:zonotope}).

This means, the interesting case is the $2$-sparse one. We were
initially hoping that $2$-sparsity entails small shadows as well.  In
this paper, we show that this is not the case, by constructing a
$d$-dimensional polytope with a $2$-sparse description by $2d$
inequalities, having a $2$-dimensional shadow of size $2^d$. In fact,
this polytope is the well-known \emph{Klee-Minty cube}~\cite{KM}, with
carefully chosen projection vectors $\cc$ and $\dd$.

In the next section, we formally define shadows of polytopes and prove
some basic properties. Section~\ref{sec:1-sparse} deals with the
$1$-sparse case. The fact that $2$-dimensional shadows are small in
this case is well-known; we will provide a simple proof for the sake
of completeness. Section~\ref{sec:2-sparse} contains the main
contribution of this paper: a 2-dimensional shadow of the
$d$-dimensional Klee-Minty cube with $2^d$ vertices.

\section{Projections and Shadows}
Let $\cc,\dd\in\R^d$ be linearly independent vectors. We consider the
\emph{linear projection} $\pi_{\cc,\dd}:\R^d\rightarrow\R^2$ defined
by
\begin{equation}
\label{eq:proj}
\pi_{\cc,\dd}(\xx) = \left(\begin{array}{ll}\cc^T\xx\\ \dd^T\xx\end{array}\right).
\end{equation}

\begin{definition}
  For a point set ${\cal P}\subseteq\R^d$, the \emph{$2$-dimensional
    shadow} (or simply \emph{shadow}) of $\PP$ w.r.t.\ $\cc$ and $\dd$
  is
\[\pi_{\cc,\dd}({\cal P}) := \{\pi_{\cc,\dd}(\xx): \xx\in{\cal P}\}.\]
\end{definition}
If ${\cal P}$ is a polytope---the convex hull of its
vertices~\cite[Proposition 2.2~(i)]{z-lop-95}---then its shadow is
easily seen to be a polytope as well: the shadow is the convex hull of
the projected vertices, some of which are the actual vertices of the
shadow~\cite[Proposition 2.2~(ii)]{z-lop-95}. Hence we have the
following fact.

\begin{fact}\label{fact:shadow} 
Let ${\cal P}$ be a polytope, and $\ww$ be a vertex of the shadow
$\pi_{\cc,\dd}({\cal P})$. Then there exists a vertex $\vv$ of ${\cal P}$ such that
$\ww=\pi_{\cc,\dd}(\vv)$.
\end{fact}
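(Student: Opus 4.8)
The plan is to combine two standard facts about polytopes, both essentially recorded already in the paragraph preceding the statement. First I would write $\PP = \mathrm{conv}(V)$, where $V = \{\vv_1,\dots,\vv_m\}$ is the (finite) set of vertices of $\PP$~\cite[Proposition 2.2~(i)]{z-lop-95}. Since $\pi_{\cc,\dd}$ is linear --- hence in particular affine --- it commutes with taking convex hulls: $\pi_{\cc,\dd}(\PP) = \pi_{\cc,\dd}(\mathrm{conv}(V)) = \mathrm{conv}(\pi_{\cc,\dd}(V))$, because the image of $\sum_i \lambda_i \vv_i$ under a linear map is $\sum_i \lambda_i \pi_{\cc,\dd}(\vv_i)$ and conversely every convex combination of the $\pi_{\cc,\dd}(\vv_i)$ arises this way. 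Thus the shadow is a polytope generated by the finite set $\{\pi_{\cc,\dd}(\vv_1),\dots,\pi_{\cc,\dd}(\vv_m)\}$.

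Second, I would invoke the elementary fact that every vertex (extreme point) of $\mathrm{conv}(S)$, for a finite set $S$, must itself lie in $S$. To see this one argues by contradiction: the vertex $\ww$ is a convex combination $\ww = \sum_j \mu_j s_j$ of points $s_j \in S$ with $\mu_j \ge 0$ and $\sum_j \mu_j = 1$; if at least two of the coefficients are positive, then $\ww$ lies in the relative interior of a segment joining two distinct points of $\mathrm{conv}(S)$, contradicting that it is a vertex, so exactly one $\mu_j$ equals $1$ and $\ww = s_j \in S$. Applying this with $S = \pi_{\cc,\dd}(V)$ to the given vertex $\ww$ of the shadow yields $\ww = \pi_{\cc,\dd}(\vv)$ for some $\vv \in V$, which is precisely the claim.

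I do not expect a genuine obstacle here: the statement is a direct consequence of \cite[Proposition 2.2]{z-lop-95}, and the only point that deserves a line of justification is the extreme-point argument above (alternatively one may simply cite \cite[Proposition 2.2~(ii)]{z-lop-95}, which already characterizes the vertices of a polytope as its non-redundant generators). It is worth noting, for the uses later in the paper, that linearity of $\pi_{\cc,\dd}$ is inessential --- the same proof works for any affine map --- and that the converse inclusion (every $\pi_{\cc,\dd}(\vv)$ lies in the shadow) is trivial; only \emph{some} of the projected vertices are vertices of the shadow, and deciding which ones is exactly the combinatorial content that the constructions in Sections~\ref{sec:1-sparse} and~\ref{sec:2-sparse} exploit.
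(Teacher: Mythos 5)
Your proposal is correct and follows exactly the same route as the paper, which justifies the fact in the sentence preceding it by citing Ziegler's Proposition 2.2 (i) and (ii): write $\PP$ as the convex hull of its vertices, push the linear map through the convex hull, and observe that vertices of the image must be among the projected vertices. You merely spell out the extreme-point step that the paper leaves to the citation.
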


The next lemma provides a sufficient condition for a vertex to
actually yield a shadow vertex.
\begin{lemma}\label{lem:shadow}
Let ${\cal P}$ be a polytope, and let $\vv$ be a vertex of ${\cal P}$. The
projection $\ww=\pi_{\cc,\dd}(\vv)$ is a vertex of the shadow 
$\pi_{\cc,\dd}({\cal P})$ if there exists a linear combination 
$\ee$ of $\cc$ and $\dd$ such that
$\vv$ is the unique maximizer of the linear function
$\ee^T\xx$ over ${\cal P}$.
\end{lemma}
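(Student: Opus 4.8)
The plan is to reduce this two-dimensional statement to a one-line observation: a linear functional $\ee^T\xx$ with $\ee$ in the span of $\cc$ and $\dd$ is just a linear functional evaluated on the projection $\pi_{\cc,\dd}(\xx)$. Concretely, write $\ee=\alpha\cc+\beta\dd$ and let $g:\R^2\to\R$ be the linear functional $g(y_1,y_2)=\alpha y_1+\beta y_2$. Then for every $\xx\in\R^d$ we have $\ee^T\xx=\alpha\cc^T\xx+\beta\dd^T\xx=g(\pi_{\cc,\dd}(\xx))$, so the objective $\ee^T\xx$ on $\PP$ factors through the projection.

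Next I would transfer the optimality of $\vv$ to optimality of $\ww$ in the shadow $Q:=\pi_{\cc,\dd}(\PP)$. Given any $\yy\in Q$, pick $\xx\in\PP$ with $\pi_{\cc,\dd}(\xx)=\yy$; then $g(\yy)=\ee^T\xx\le\ee^T\vv=g(\ww)$ because $\vv$ maximizes $\ee^T\xx$ over $\PP$. Moreover, if $g(\yy)=g(\ww)$, then $\ee^T\xx=\ee^T\vv$, so $\xx$ is itself a maximizer, and uniqueness of $\vv$ forces $\xx=\vv$, hence $\yy=\pi_{\cc,\dd}(\vv)=\ww$. Thus $\ww$ is the \emph{unique} maximizer of the linear functional $g$ over the polytope $Q$.

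Finally I would invoke the standard fact that a unique maximizer of a linear functional over a polytope is a vertex. For completeness this can be argued directly: if $\ww=\tfrac12(\yy_1+\yy_2)$ with $\yy_1,\yy_2\in Q$, then $g(\ww)=\tfrac12\bigl(g(\yy_1)+g(\yy_2)\bigr)\le g(\ww)$ forces $g(\yy_1)=g(\yy_2)=g(\ww)$, whence $\yy_1=\yy_2=\ww$ by the uniqueness just established. So $\ww$ is not a proper convex combination of two other points of $Q$, i.e.\ it is an extreme point of $Q$; since $Q$ is a polytope (by the discussion preceding Fact~\ref{fact:shadow}), its extreme points are exactly its vertices, and we are done. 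There is essentially no computation here; the only point that needs a moment of care is the equality case above, namely making sure that $g(\yy)=g(\ww)$ pins down $\yy$ itself rather than merely producing \emph{some} preimage that happens to be a maximizer.
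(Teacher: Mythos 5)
Your argument is correct and essentially the same as the paper's: both observe that $\ee^T\xx=g(\pi_{\cc,\dd}(\xx))$ for the linear functional $g=\aa^T(\cdot)$ on $\R^2$, transfer unique maximality of $\vv$ on $\PP$ to unique maximality of $\ww$ on the shadow, and conclude that $\ww$ is a vertex. The only difference is that the paper ends by citing Ziegler's Definition~2.1 (a vertex \emph{is} a unique maximizer of some linear functional), whereas you supply an elementary extreme-point argument for that last step and you make the equality case in the transfer explicit; both are fine and add a bit of self-containedness without changing the route.
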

\begin{proof}
For $\aa=(a_1,a_2)\in\R^2$, define $\ee=a_1\cc+a_2\dd$. With
$\yy=\pi_{\cc,\dd}(\xx)$,
we have 
\begin{equation}
\label{eq:ea}
\ee^T\xx = a_1\cc^T\xx + a_2\dd^T\xx = \aa^T\yy.
\end{equation}
Thus, if vertex $\vv$ is the unique maximizer of $\ee^T\xx$ over
${\cal P}$, then $\ww=\pi_{\cc,\dd}(\vv)$ is the unique maximizer of
$\aa^T\yy$ over $\pi_{\cc,\dd}({\cal P})$. This in turn means that 
$\ww$ is a vertex of the shadow~\cite[Definition
2.1]{z-lop-95}.
\end{proof}

\section{The $1$-Sparse Case}\label{sec:1-sparse}
Let us consider a system of inequalities in $d$ variables $x_1,x_2,\ldots,x_d$, such that each inequality contains only one variable. Hence, the inequality is either an upper or a lower bound for that variable. By considering the tightest lower
and upper bounds for each variable, we see that the set of solutions consists of 
all $\xx\in\R^d$ such that 
\begin{equation}\label{eq:box}
\ell_i \leq x_i \leq u_i, \quad i=1,2,\ldots,d,
\end{equation}
for suitable numbers $\ell_i<u_i$ (we assume that the solution set is
full-dimensonal and bounded). The vertices of the polytope ${\cal
  P}$---a box---defined by these inequalities are therefore all the
$2^d$ points $\xx$ for which $x_i\in\{\ell_i,u_i\}$ for all $i$.

\begin{theorem}\label{thm:zonotope} Let ${\cal P}$ be a box as in (\ref{eq:box}). Then 
$\pi_{\cc,\dd}({\cal P})$ has at most $2d$ vertices.
\end{theorem}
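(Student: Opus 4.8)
The plan is to identify $\PP$ with a translated zonotope and use that linear maps send zonotopes to zonotopes, which reduces everything to a short combinatorial count. Put $\delta_i:=u_i-\ell_i>0$, let $\vv_0:=(\ell_1,\ldots,\ell_d)^T$ be the ``bottom'' vertex, and let $\mathbf{e}_i$ be the $i$-th standard unit vector of $\R^d$. Then $\PP$ is the Minkowski sum $\vv_0+\sum_{i=1}^d[\nula,\delta_i\mathbf{e}_i]$ of the point $\vv_0$ with $d$ axis-parallel segments. Since $\pi_{\cc,\dd}$ is linear, it commutes with Minkowski sums and with scaling of segments, so
\[
\pi_{\cc,\dd}(\PP)\;=\;\pi_{\cc,\dd}(\vv_0)\;+\;\sum_{i=1}^d[\nula,\uu_i],\qquad \uu_i:=\delta_i\,\pi_{\cc,\dd}(\mathbf{e}_i)=\delta_i(c_i,d_i)^T\in\R^2 .
\]
Thus $\pi_{\cc,\dd}(\PP)$ is a translate of the planar zonotope $Z:=\sum_{i=1}^d[\nula,\uu_i]$, and it suffices to show that $Z$ has at most $2d$ vertices.

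Next I would parametrize the vertices of $Z$ by sign vectors of linear functionals. Every vertex $\ww$ of the polygon $Z$ is the unique maximizer over $Z$ of $\aa^T\yy$ for all $\aa$ in a full-dimensional open cone of $\R^2$, and in particular for some $\aa$ orthogonal to no nonzero $\uu_i$; call such an $\aa$ \emph{generic}. Because $Z$ is a sum of segments, maximizing $\aa^T\yy$ over $Z$ is done summand by summand: a maximizing point uses the endpoint $\uu_i$ of $[\nula,\uu_i]$ when $\aa^T\uu_i>0$, the endpoint $\nula$ when $\aa^T\uu_i<0$, and either endpoint when $\aa^T\uu_i=0$. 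Hence for generic $\aa$ the maximizer is the single point $\sum_{i:\,\aa^T\uu_i>0}\uu_i$, which depends only on the sign vector $\sigma(\aa):=\bigl(\sign(\aa^T\uu_i)\bigr)_{i:\,\uu_i\neq\nula}$. Since a vertex with sign vector $\sigma$ must equal $\sum_{i:\,\sigma_i>0}\uu_i$, assigning to each vertex of $Z$ the sign vector of a generic $\aa$ uniquely maximizing it is injective, so it is enough to bound the number of generic sign vectors.

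Finally I would count sign vectors by a rotation argument. As the direction of $\aa$ sweeps once around the unit circle, $\sign(\aa^T\uu_i)$ changes only at the two directions orthogonal to $\uu_i$. The at most $d$ nonzero generators determine at most $d$ distinct lines through the origin, and deleting these $\le 2d$ switching directions splits the circle into at most $2d$ open arcs; on each arc every coordinate of $\sigma(\aa)$, hence $\sigma(\aa)$ itself, is constant. So there are at most $2d$ generic sign vectors, and therefore $Z$ — and with it the shadow $\pi_{\cc,\dd}(\PP)$ — has at most $2d$ vertices.

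The argument is short, and the only point needing care is bookkeeping around degeneracies, all of which only help: a generator $\uu_i=\nula$ (which happens precisely when $c_i=d_i=0$, i.e.\ when $x_i$ occurs in neither $\cc$ nor $\dd$) may be discarded; two parallel generators $\uu_i\parallel\uu_j$ contribute the same switching line and do not increase the arc count; and if $Z$ fails to be two-dimensional it is a point or a segment, with at most $2\le 2d$ vertices anyway. The one ``soft'' fact I rely on is the standard observation, implicit already in the proof of Lemma~\ref{lem:shadow}, that a vertex of a polytope in $\R^2$ is the unique maximizer of a full-dimensional cone of linear functionals, so that restricting attention to generic $\aa$ loses no vertex.
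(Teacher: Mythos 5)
Your proof is correct and is essentially the same argument as the paper's: both proofs map each shadow vertex injectively to the sign pattern $\bigl(\sign(a_1c_i+a_2d_i)\bigr)_i$ of a generic maximizing functional, then bound the number of such patterns by $2d$ — your ``circle sweep over at most $2d$ switching directions'' is just the paper's ``arrangement of $d$ lines through the origin has at most $2d$ two-dimensional cells'' phrased dynamically rather than statically. The only cosmetic difference is that you make the zonotope structure (Minkowski sum of segments commuting with the linear projection) explicit, whereas the paper argues directly on the box and mentions the zonotope connection only in the closing remark.
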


\begin{proof} We may assume that there is no $i$ such that $c_i=d_i=0$
  (otherwise we reduce the dimension of the problem by ignoring
  coordinate $i$ and obtain a bound of $2(d-1)$). We now prove that
  there are at most $2d$ vertices of ${\cal P}$ that project to some
  vertex $\ww$ of $\pi_{\cc,\dd}({\cal P})$. We recall that $\ww$ is a
  vertex if and only if $\ww$ is the unique maximizer of $\aa^T\yy$
  over $\pi_{\cc,\dd}({\cal P})$ for suitable $\aa\in\R^2$~\cite[Definition
  2.1]{z-lop-95}. Again, we set $\ee=a_1\cc+a_2\dd$.

  Since $\ww$ is the \emph{unique} maximizer, we can slightly perturb
  $\aa$ and w.l.o.g.\ assume that $e_i=a_1c_i+a_2d_i\neq 0$ for all
  $i$. We now claim that the sign pattern of $\ee$ uniquely determines
  the preimage $\vv$ of $\ww$. To see this, we use (\ref{eq:ea}) to
  argue that any preimage of $\ww$ maximizes $\ee^T\xx$ over ${\cal
    P}$.  But under $e_i\neq 0$ for all $i$, there is only one such
  maximizer $\vv$, given by:
  \[
  v_i = \left\{\begin{array}{ll}
        \ell_i, &\mbox{if $e_i<0$} \\
        u_i & \mbox{if $e_i>0$}.\
      \end{array}\right., i=1,2,\ldots,d.
      \]

Thus, the theorem follows if we can prove that there are at most $2d$ different
sign patterns that may occur in $\ee$. For each $i$, we consider the
line
\[L_i = \{\yy\in\R^2: y_1 c_i + y_2 d_i = 0\}\] through the
origin. The \emph{arrangement} of all $d$ such lines subdivides the
plane into \emph{cells} where all points $\aa$ within a fixed cell
lead to the same sign pattern of $\ee$. The twodimensional cells
correspond to the nowhere zero sign patterns of interest. It remains
to observe that an arrangement of $d$ lines through the origin induces
at most $2d$ twodimensional cells.
\end{proof}

We remark that we have reproved a special case of a general statement
that relates \emph{zonotopes} to \emph{arrangements of
  hyperplanes}~\cite[Corollary 7.17]{z-lop-95}.

\section{The $2$-Sparse Case}\label{sec:2-sparse}
We begin by introducing the $d$-dimensional Klee-Minty cube in the
variant of Amenta and Ziegler~\cite[Section 4.1]{AZ}. The original
Klee-Minty cube---the first and celebrated worst-case input for the
simplex method (with Dantzig's pivot rule)---differs from this variant
by a suitable scaling of the inequalities~\cite{KM}.

\begin{definition} For fixed $0<\varepsilon<1/2$, the $d$-dimensional
  Klee-Minty cube is the set of solutions of the following system of
  $2d$ inequalities that come in $d$ pairs, where the $j$th pair specifies
  a lower and an upper bound for variable~$x_j$.
\begin{equation}\label{eq:km}
\begin{array}{rcccl}
0 &\leq& x_1 &\leq& 1 \\
\varepsilon x_{j-1} &\leq& x_j &\leq& 1- \varepsilon x_{j-1}, \quad j=2,\ldots,d. 
\end{array}
\end{equation}
\end{definition}

\subsection{Vertices}
It is easily shown by induction that $0\leq x_j\leq 1$ for every
$\xx=(x_1,x_2,\ldots,x_d)$ in the polyhedron (\ref{eq:km})
and every $j$. Hence, we are dealing with a polytope.
Using $\varepsilon<1/2$, this in turn implies that from
any pair of inequalities, at most one can be tight. A vertex of the
polytope (having $d$ tight inequalities) can therefore
uniquely be encoded by a bit vector $\uu\in\{0,1\}^d$ where $u_j=0$
means that the lower bound is tight in the $j$th pair of inequalities,
while $u_j=1$ means that the upper bound is tight. In fact, every bit
vector $\uu$ induces a vertex $\xx(\uu)$ defined by selecting from each
pair of inequalities the tight one according to $\uu$.

\begin{definition} For $\uu\in\{0,1\}^d$, we ley $\xx(\uu)\in\R^d$
be the vector recursively defined by
\begin{equation}\label{def:xu}
x_j(\uu) := (1-u_j)\varepsilon x_{j-1}(\uu) + u_j
(1-\varepsilon x_{j-1}(\uu)) = u_j + (1-2u_j)\varepsilon x_{j-1}(\uu),
\end{equation}
for $j=1,\ldots,d$, 
where we use the convention that $x_0(\uu)=0$. In particular, $\xx(\uu)$ is
one of $2^d$ vertices of the Klee-Minty cube (\ref{eq:km}).
\end{definition}

\subsection{Edges and Edge Directions}
Two vertices $\uu,\uu'$ are \emph{neighbors} if and only
if their convex hull is an edge (having $d-1$ tight
inequalities). This in turn is the case if and only if $\uu'$ is of
the form $\uu\oplus\{\ell\}$ (the bit vector obtained from $\uu$ by
flipping the $\ell$-th component). A general result~\cite[Lemma
3.6]{z-lop-95} entails the following key fact.

\begin{fact}\label{fact:neighbors}
Let $\ee\in\R^d$, $\uu\in\{0,1\}^d$. The following two statements are equivalent.
\begin{itemize}
\item[(i)] $\ee^T\xx(\uu) > \ee^T\xx$ for all $\xx$ in (\ref{eq:km})
\item[(ii)] $\ee^T\xx(\uu) > \ee^T\xx(\uu\oplus\{\ell\})$ for all
  $\ell\in\{1,2,\ldots,d\}$.
\end{itemize}
\end{fact}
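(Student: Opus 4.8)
The plan is to treat the two implications separately. The direction (i) $\Rightarrow$ (ii) is immediate: each neighbor $\xx(\uu\oplus\{\ell\})$ is a vertex, hence a point, of the polytope~(\ref{eq:km}), so instantiating (i) at $\xx=\xx(\uu\oplus\{\ell\})$ yields exactly (ii). All the work is in the converse, which is the familiar ``local optimality implies global optimality'' principle for a linear function over a simple polytope.

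For (ii) $\Rightarrow$ (i), I would first record the structural facts already noted for the Klee-Minty cube: at most one inequality of each pair is tight, so every vertex $\xx(\uu)$ has exactly $d$ tight inequalities, and since $\xx(\uu)$ is their unique common solution, the corresponding $d\times d$ subsystem $A'\xx\le \bb'$ has invertible $A'$. Then $\{\xx:A'\xx\le\bb'\}=\xx(\uu)+\{\dd:A'\dd\le\nula\}$ is a simplicial cone whose $d$ extreme rays are the columns $\dd_1,\dots,\dd_d$ of $-(A')^{-1}$; the ray $\dd_\ell$ keeps all tight constraints except the $\ell$-th tight, so it is a positive multiple of $\xx(\uu\oplus\{\ell\})-\xx(\uu)$, i.e. the edge direction toward that neighbor. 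Since the rows of $A'$ are among the defining inequalities of the polytope, we have $\PP\subseteq\{\xx:A'\xx\le\bb'\}$, so every $\xx$ in~(\ref{eq:km}) can be written $\xx=\xx(\uu)+\sum_\ell\lambda_\ell\dd_\ell$ with $\lambda_\ell\ge 0$, and $\lambda\neq\nula$ whenever $\xx\neq\xx(\uu)$.

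Now condition (ii), via $\xx(\uu\oplus\{\ell\})=\xx(\uu)+\mu_\ell\dd_\ell$ with $\mu_\ell>0$, says precisely that $\ee^T\dd_\ell<0$ for every $\ell$. Hence for any $\xx\neq\xx(\uu)$ in~(\ref{eq:km}), $\ee^T\xx=\ee^T\xx(\uu)+\sum_\ell\lambda_\ell\,\ee^T\dd_\ell<\ee^T\xx(\uu)$, which is (i). (This is exactly what the cited Lemma~3.6 of~\cite{z-lop-95} delivers, so one may instead simply invoke it and skip the cone computation.)

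The only genuinely nontrivial point is the claim that $\PP$ lies inside the tangent cone $\xx(\uu)+\mathrm{cone}(\dd_1,\dots,\dd_d)$ and that this cone is spanned by the $d$ edge directions — equivalently, that~(\ref{eq:km}) is simple with linearly independent tight rows at each vertex. I expect this to be the main obstacle, but it is a mild one: it is forced by the observations that each vertex has exactly $d$ tight inequalities and that they determine the vertex uniquely. Everything downstream — the containment $\PP\subseteq\{\xx:A'\xx\le\bb'\}$ and the identification of the extreme rays of $\{\dd:A'\dd\le\nula\}$ with the directions to the neighbors $\xx(\uu\oplus\{\ell\})$ — is routine linear algebra that I would not spell out in detail.
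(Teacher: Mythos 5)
Your proof is correct and matches the route the paper takes: the paper simply invokes Ziegler's Lemma~3.6 (a locally optimal vertex of a polytope is globally optimal for a linear objective), and your tangent-cone argument is precisely the standard proof behind that lemma, specialized to the Klee--Minty cube where simplicity (exactly one tight inequality per pair) and invertibility of the lower-triangular tight subsystem $A'$ are easy to verify. You even note at the end that one could skip the cone computation and cite the lemma directly, which is exactly what the paper does.
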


Below we will use this fact together with Lemma~\ref{lem:shadow} to
prove that $\xx(\uu)$ yields a shadow vertex for all $\uu$. In order
to arrive at vectors $\cc,\dd$ that define a suitable shadow, we need
a little more notation.

\begin{definition} For $\uu\in\{0,1\}^d$ and
  $\ell\in\{1,2,\ldots,d\}$, we define
\begin{equation}\label{def:q}
q^{(\ell)}(\uu) = x_{\ell}(\uu\oplus\{\ell\}) - x_{\ell}(\uu), \quad \ell=1,2,\ldots,d.
\end{equation}
Furthermore, for $i,j\in\{1,2,\ldots,d\}$, we set
\begin{equation}\label{def:p}
p_i^j(\uu)=\prod_{k=i}^j(1-2u_k) \in\{-1,1\}.
\end{equation}
\end{definition}
\noindent Note that $p_i^j(\uu)$ simply encodes the parity of the bit
vector $u_i,u_{i+1},\ldots,u_j$. In order to apply
Fact~\ref{fact:neighbors}, we need to compute the edge directions.

\begin{lemma} Let $\uu\in\{0,1\}^d$ and $\ell\in\{1,2,\ldots,d\}$.
 Then
  \[
  x_j(\uu\oplus\{\ell\}) - x_j(\uu) =\left\{
  \begin{array}{ll}
    0, & \mbox{if $j<\ell$}, \\
     p_{\ell+1}^j(\uu)\varepsilon^{j-\ell} q^{(\ell)}(\uu)& \mbox{if
      $j\geq \ell$}.
  \end{array}\right.
\]

\end{lemma}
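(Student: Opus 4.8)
The plan is to prove the formula by induction on $j$, for fixed $\ell$.

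\paragraph{Base cases and the vanishing part.} First I would handle the case $j<\ell$. Flipping bit $\ell$ only affects the recursion (\ref{def:xu}) from index $\ell$ onward, since $x_j(\uu)$ for $j<\ell$ depends only on $u_1,\dots,u_j$, none of which is $u_\ell$. A clean way to say this formally is a trivial sub-induction: $x_1(\uu)=u_1$ is unchanged when $\ell>1$, and if $x_{j-1}(\uu\oplus\{\ell\})=x_{j-1}(\uu)$ and $j<\ell$, then since $u_j$ is also unchanged, (\ref{def:xu}) gives $x_j(\uu\oplus\{\ell\})=x_j(\uu)$. Next, the case $j=\ell$ is exactly the definition (\ref{def:q}) of $q^{(\ell)}(\uu)$, and here the claimed expression reads $p_{\ell+1}^\ell(\uu)\varepsilon^0 q^{(\ell)}(\uu)$; since the product $p_{\ell+1}^\ell$ is empty and hence equals $1$, this matches. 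So the base case $j=\ell$ holds.

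\paragraph{The inductive step.} Now suppose $j>\ell$ and the formula holds for $j-1$ (which is $\geq\ell$). Using the second form of (\ref{def:xu}), namely $x_j(\uu)=u_j+(1-2u_j)\varepsilon x_{j-1}(\uu)$, and the fact that for $j>\ell$ the bit $u_j$ is \emph{not} flipped, I would subtract:
\[
x_j(\uu\oplus\{\ell\}) - x_j(\uu) = (1-2u_j)\varepsilon\bigl(x_{j-1}(\uu\oplus\{\ell\}) - x_{j-1}(\uu)\bigr).
\]
Then I apply the induction hypothesis to the bracket, getting $(1-2u_j)\varepsilon\cdot p_{\ell+1}^{j-1}(\uu)\varepsilon^{j-1-\ell}q^{(\ell)}(\uu)$. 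Finally, observe $(1-2u_j)\,p_{\ell+1}^{j-1}(\uu) = p_{\ell+1}^j(\uu)$ by definition (\ref{def:p}) of the parity products, and $\varepsilon\cdot\varepsilon^{j-1-\ell} = \varepsilon^{j-\ell}$, which is exactly the claimed formula for index $j$.

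\paragraph{Main obstacle.} Honestly, there isn't a serious obstacle here; the proof is a routine two-line induction once one notices that flipping bit $\ell$ leaves $u_j$ untouched for $j\neq\ell$, so the difference telescopes multiplicatively through the linear recursion. The only points requiring care are bookkeeping ones: correctly reading the empty-product convention in $p_{\ell+1}^\ell(\uu)=1$ so that the $j=\ell$ base case is consistent with the definition of $q^{(\ell)}(\uu)$, and making sure the exponent of $\varepsilon$ and the index range of the parity product both start at $\ell+1$ (not $\ell$) — a typical off-by-one trap. I would also note explicitly that this lemma, combined with (\ref{def:q}), lets one write $q^{(\ell)}(\uu)$ itself in closed form: from (\ref{def:xu}), $x_\ell(\uu\oplus\{\ell\}) - x_\ell(\uu) = (1-2(1-u_\ell)) \cdot\text{(unchanged)} - \dots$; more simply, flipping $u_\ell$ sends $u_\ell + (1-2u_\ell)\varepsilon x_{\ell-1}$ to $(1-u_\ell) + (2u_\ell-1)\varepsilon x_{\ell-1}$, so $q^{(\ell)}(\uu) = (1-2u_\ell)(1 - 2\varepsilon x_{\ell-1}(\uu))$, which is nonzero because $0\le x_{\ell-1}(\uu)\le 1$ and $\varepsilon<1/2$ — a fact that will matter when applying Fact~\ref{fact:neighbors} and Lemma~\ref{lem:shadow} to get strict inequalities.
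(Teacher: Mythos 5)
Your proof is correct and follows essentially the same induction on $j$ as the paper's: the vanishing case $j<\ell$ from the dependence of $x_j$ on $u_1,\dots,u_j$ only, the base case $j=\ell$ from the definition of $q^{(\ell)}(\uu)$ and the empty-product convention, and the step $j>\ell$ by factoring $(1-2u_j)\varepsilon$ out of the difference and invoking the inductive hypothesis. The additional closed-form observation about $q^{(\ell)}(\uu)$ is a nice side remark and indeed reappears later in the paper's proof of Lemma~\ref{lem:main}.
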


\begin{proof} By induction on $j$. For $j<\ell$, the values 
  $x_j(\uu\oplus\{\ell\})$ and $x_j(\uu)$ agree, since by (\ref{def:xu}), they
  only depend on bits $u_i,i\leq j<\ell$. For $j=\ell$, we recover
  (\ref{def:q}). For $j>\ell$, we use (\ref{def:xu}) to compute
\[
  x_{j}(\uu\oplus\{\ell\})-x_{j}(\uu)=
  (1-2u_j)\cdot\varepsilon  (x_{j-1}(\uu\oplus\{\ell\})-x_{j-1}(\uu)),
\]
and the statement follows from the inductive hypothesis.
\end{proof}

The previous lemma shows that all components of 
$\xx(\uu\oplus\{\ell\}))-\xx(\uu)$ are multiples
of $q^{(\ell)}(\uu)$, and it will be convenient to take out this factor.
\begin{definition}
For $\uu\in\{0,1\}^d$ and $\ell\in\{1,2,\ldots,d\}$, let $\yy^{(\ell)}(\uu)$ be
the vector defined by 
\begin{equation}\label{def:y}
 y^{(\ell)}_j(\uu)=\left\{
  \begin{array}{ll}
    0, & \mbox{if $j<\ell$}, \\
     p_{\ell+1}^j(\uu)\varepsilon^{j-\ell}& \mbox{if $j\geq \ell$}.
  \end{array}\right.
\end{equation}
We thus have 
\begin{equation}\label{eq:xy}
\xx(\uu\oplus\{\ell\})-\xx(\uu)   = \yy^{(\ell)}(\uu) \cdot q^{(\ell)}(\uu).
\end{equation}
\end{definition}

\subsection{The Shadow}
Let $\PP$ be the Klee-Minty cube as defined in (\ref{eq:km}). We want
to construct vectors $\cc$ and $\dd$ such that the shadow
$\pi_{\cc,\dd}(\PP)$ has the maximum of $2^d$ vertices. Our approach
is as follows. With a suitable $\cc$, we use
$\dd=(0,\ldots,0,1)$. For every $\uu\in\{0,1\}^d$,
we find a multiple $\dd(\uu)$ of $\dd$ such  that the vertex
$\xx(\uu)$ is the unique maximizer of the linear function $\ee(\uu)^T\xx :=
(\cc+\dd(\uu))^T\xx$ over (\ref{eq:km}).  With
Lemma~\ref{lem:shadow}, we conclude that $\pi_{\cc,\dd}(\xx(\uu))$ is
a shadow vertex.

\begin{definition}
For $\uu\in\{0,1\}^d$, let 
\[\cc :=
(\varepsilon^{3(d-1)},\varepsilon^{3(d-2)},\ldots,\varepsilon^3,0)\in\R^d\] and
\[\dd(u) :=
(0,0,\ldots,0,-\sum_{j=0}^{d-1}p_{j+1}^d(\uu)\varepsilon^{2(d-j)}) \in\R^d.
\]
\end{definition}

\begin{lemma}\label{lem:main}
Let $\uu\in\{0,1\}^d, \ell\in\{1,2\ldots,d\}, \ee(\uu) :=
\cc+\dd(u)$. For $\varepsilon<1/2$,
\[
\ee(\uu)^T(\xx(\uu\oplus\{\ell\})-\xx(\uu)) < 0,
\]
meaning that $\xx(\uu)$ has larger $\ee(\uu)^T\xx$-value than
all its neighbors. 
\end{lemma}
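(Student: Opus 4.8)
The plan is to use the decomposition~\eqref{eq:xy}, which tells us that
$\xx(\uu\oplus\{\ell\})-\xx(\uu) = \yy^{(\ell)}(\uu)\cdot q^{(\ell)}(\uu)$,
so it suffices to understand the sign of $q^{(\ell)}(\uu)$ and the sign of
$\ee(\uu)^T\yy^{(\ell)}(\uu)$ separately, and then argue that their product is
negative. First I would pin down $\sign q^{(\ell)}(\uu)$: from the recursion
\eqref{def:xu} one sees that flipping bit $\ell$ from $0$ to $1$ increases
$x_\ell$ (since $0\le\varepsilon x_{\ell-1}<1-\varepsilon x_{\ell-1}$), so
$q^{(\ell)}(\uu) = x_\ell(\uu\oplus\{\ell\}) - x_\ell(\uu)$ is positive when
$u_\ell=0$ and negative when $u_\ell=1$; that is,
$\sign q^{(\ell)}(\uu) = 1-2u_\ell$. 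Hence the claim
$\ee(\uu)^T(\xx(\uu\oplus\{\ell\})-\xx(\uu))<0$ is equivalent to showing
$(1-2u_\ell)\,\ee(\uu)^T\yy^{(\ell)}(\uu)<0$, i.e.\ that
$\ee(\uu)^T\yy^{(\ell)}(\uu)$ has sign $-(1-2u_\ell) = 2u_\ell-1$.

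Next I would expand $\ee(\uu)^T\yy^{(\ell)}(\uu)$ using the explicit formulas.
Since $\yy^{(\ell)}(\uu)$ vanishes in coordinates $j<\ell$ and equals
$p_{\ell+1}^j(\uu)\varepsilon^{j-\ell}$ for $j\ge\ell$ (with the convention
$p_{\ell+1}^\ell=1$), and $\ee(\uu) = \cc + \dd(\uu)$ where
$c_j = \varepsilon^{3(d-j)}$ for $j<d$, $c_d=0$, and $\dd(\uu)$ is supported
only on coordinate $d$, the inner product splits as
\[
\ee(\uu)^T\yy^{(\ell)}(\uu)
= \sum_{j=\ell}^{d-1} \varepsilon^{3(d-j)}\,p_{\ell+1}^j(\uu)\,\varepsilon^{j-\ell}
\;+\; d_d(\uu)\,p_{\ell+1}^d(\uu)\,\varepsilon^{d-\ell}.
\]
Substituting $d_d(\uu) = -\sum_{j=0}^{d-1}p_{j+1}^d(\uu)\varepsilon^{2(d-j)}$
and using the multiplicativity $p_{\ell+1}^d(\uu) = p_{\ell+1}^j(\uu)\,p_{j+1}^d(\uu)$
together with $p_{j+1}^d(\uu)^2=1$, the $\dd$-contribution becomes
$-\varepsilon^{d-\ell}\sum_{j=0}^{d-1}p_{\ell+1}^{j}(\uu)\,\varepsilon^{2(d-j)}$,
where for $j<\ell$ we read $p_{\ell+1}^{j}(\uu)$ as
$p_{j+1}^{\ell}(\uu)$ (the relevant parity of the intervening bits). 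I would
then identify the dominant term: the summand with the smallest power of
$\varepsilon$. Comparing exponents $3(d-j)+(j-\ell) = 3d-2j-\ell$ from $\cc$
against $d-\ell+2(d-j) = 3d-2j-\ell$ from $\dd$, the two families of terms have
\emph{matching} exponents for each $j$ in the overlapping range, so I would pair
them up index by index and look at the combined coefficient.

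The key simplification I expect is that, after pairing, almost everything
cancels and one term survives as the dominant one. Concretely, for
$\ell\le j\le d-1$ the $\cc$-term contributes $+p_{\ell+1}^j(\uu)$ at exponent
$3d-2j-\ell$ and the $\dd$-term contributes $-p_{\ell+1}^j(\uu)$ at the same
exponent, so they cancel exactly; what remains from the $\dd$-sum are the terms
with $j<\ell$ (i.e.\ $j=0,\ldots,\ell-1$), all carrying the sign
$-p_{j+1}^{\ell}(\uu)$, and from the $\cc$-sum the term $j=d$ does not exist
since $c_d=0$ — so the smallest surviving exponent is $3d-2(\ell-1)-\ell = 3d-3\ell+2$
coming from $j=\ell-1$, with coefficient $-p_{\ell}^{\ell}(\uu) = -(1-2u_\ell)$.
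Since $0<\varepsilon<1/2$, this leading term dominates the geometric tail of the
remaining terms, so $\sign\bigl(\ee(\uu)^T\yy^{(\ell)}(\uu)\bigr) = \sign\bigl(-(1-2u_\ell)\bigr) = 2u_\ell-1$,
which is exactly what we needed. The main obstacle is the bookkeeping: getting
the exponent arithmetic and the parity/sign factors $p$ to line up correctly so
that the cancellation is exact and the one surviving dominant term has the right
sign; once the cancellation is seen, the domination step is a routine geometric-series
estimate using $\varepsilon<1/2$ (one must check the ratio of consecutive surviving
terms is $\varepsilon^2<1/4$, comfortably enough to absorb the bounded number of terms).
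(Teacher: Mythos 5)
Your proposal is correct and follows essentially the same approach as the paper's proof: expand $\ee(\uu)^T\yy^{(\ell)}(\uu)$ into the $\cc$- and $\dd$-contributions, use the parity identity $p_{j+1}^d(\uu)p_{\ell+1}^d(\uu)=p_{\ell+1}^j(\uu)$ to cancel the terms with $j\ge\ell$, identify the surviving term with the lowest $\varepsilon$-exponent (at $j=\ell-1$, coefficient $-(1-2u_\ell)$), and invoke $\varepsilon<1/2$ to let it dominate. The only cosmetic difference is that you determine $\sign q^{(\ell)}(\uu)=1-2u_\ell$ up front, while the paper computes $q^{(\ell)}(\uu)=(1-2u_\ell)(1-2\varepsilon x_{\ell-1}(\uu))$ at the end; the content is identical.
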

According to Fact~\ref{fact:neighbors}, $\xx(\uu)$ then uniquely maximizes 
$\ee(\uu)^T\xx$ over the Klee-Minty cube (\ref{eq:km}) and thus contributes to
the shadow by Lemma~\ref{lem:shadow}. It only remains to prove Lemma~\ref{lem:main}.

\begin{proof}
Making use of (\ref{eq:xy}), we first compute 
$\ee(\uu)^T\yy(u) = \cc^T\yy(\uu)+\dd(\uu)^T\yy(\uu)$. We
have
\begin{equation}\label{eq:cc}
\cc^T\yy(\uu) =
\sum_{j=\ell}^{d-1}\varepsilon^{3(d-j)}p_{\ell+1}^j(\uu)\varepsilon^{j-\ell}
=
\sum_{j=\ell}^{d-1}p_{\ell+1}^j(\uu) \varepsilon^{(3d-\ell)-2j}
\end{equation}
and
\begin{eqnarray}
\dd(\uu)^T\yy(\uu) &=& -
\displaystyle{\sum_{j=0}^{d-1}p_{j+1}^d(\uu)\varepsilon^{2(d-j)}p_{\ell+1}^d(\uu)\varepsilon^{d-\ell}}\nonumber\\ 
&=& - \displaystyle{\sum_{j=0}^{d-1}p_{j+1}^d(\uu) p_{\ell+1}^d(\uu)
  \varepsilon^{(3d-\ell)-2j}}.
\label{eq:dd}
\end{eqnarray}

For $j\geq\ell$, (\ref{def:p}) and the subsequent parity
interpretation of $p$ yields 
\[p_{j+1}^d(\uu)p_{\ell+1}^d(\uu)= p_{\ell+1}^j(\uu),\]
meaning that the terms for $j=\ell,\ldots,d-1$ in (\ref{eq:cc}) and
(\ref{eq:dd}) cancel, and we get
\[
\ee(\uu)^T\yy(\uu) = \cc^T\yy(\uu)+\dd(\uu)^T\yy(\uu)=- \sum_{j=0}^{\ell-1}p_{j+1}^d(\uu) p_{\ell+1}^d(\uu)
  \varepsilon^{(3d-\ell)-2j}.
\]
This expression is a polynomial in $\varepsilon$ whose nonzero
coefficients are in $\{-1,1\}$. Hence, for $\varepsilon<1/2$, the sign
of this polynomial is determined by the coefficient for $j=\ell-1$
which is
\[
-p_{\ell}^d(\uu)p_{\ell+1}^d(\uu) = -p_{\ell}^{\ell}(\uu) = -(1-2u_{\ell}).
\]

Now using (\ref{eq:xy}), our actual expression of interest
$\ee(\uu)^T(\xx(\uu\oplus\{\ell\})-\xx(\uu))$ has the same sign as
\begin{eqnarray*}
-(1-2u_{\ell})q^{(\ell)}(\uu) &=&
-(1-2u_{\ell})(x_{\ell}(\uu\oplus\{\ell\}) - x_{\ell}(\uu)) \\
&\stackrel{(\ref{def:xu})}{=}& -(1-2u_{\ell})^2 (1-2\varepsilon x_{\ell-1}(\uu)).
\end{eqnarray*}
By $\varepsilon<1/2$ and $x_{\ell-1}(\uu)\leq 1$, the sign of this
expression is negative, as desired. 
\end{proof}

%\bibliography{Klee-Minty-Goldfarb}

\begin{thebibliography}{1}

\bibitem{AZ}
Nina Amenta and G{\"u}nter~M. Ziegler.
\newblock Deformed products and maximal shadows of polytopes.
\newblock In B.~Chazelle, J.~Goodman, and R.~Pollack, editors, {\em Advances in
  Discrete and Computational Geometry}, volume 223 of {\em Contemporary
  Mathematics}, pages 57--90. Amer. Math. Soc., Providence, RI, 1998.

\bibitem{gjm-elbpcrp-12}
Bernd G{\"a}rtner, Martin Jaggi, and Cl{\'e}ment Maria.
\newblock An exponential lower bound on the complexity of regularization paths.
\newblock {\em Journal of Computational Geometry}, 3(1):168--195, 2012.

\bibitem{Gass:1955hg}
Saul Gass and Thomas Saaty.
\newblock {The computational algorithm for the parametric objective function}.
\newblock {\em Naval Research Logistics Quarterly}, 2(1-2):39--45, 1955.

\bibitem{Goldfarb1}
Donald Goldfarb.
\newblock {Worst case complexity of the shadow vertex simplex algorithm}.
\newblock Technical report, Columbia University, 1983.

\bibitem{Goldfarb2}
Donald Goldfarb.
\newblock {On the Complexity of the Simplex Method}.
\newblock In {\em Advances in optimization and numerical analysis, Proc. 6th
  Workshop on Optimization and Numerical Analysis, January 1992}, pages 25--38,
  Oaxaca, Mexico, 1994.

\bibitem{Hastie:2004uj}
Trevor Hastie, Saharon Rosset, Robert Tibshirani, and Ji~Zhu.
\newblock {The Entire Regularization Path for the Support Vector Machine}.
\newblock {\em The Journal of Machine Learning Research}, 5:1391--1415, 2004.

\bibitem{KM}
Victor Klee and George~J. Minty.
\newblock How good is the simplex algorithm?
\newblock In O.~Shisha, editor, {\em Inequalities}, volume III, pages 159--175.
  Academic Press, New York, 1972.

\bibitem{Murty1980}
Katta~G Murty.
\newblock Computational complexity of parametric linear programming.
\newblock {\em Mathematical Programming}, 19(1):213--219, 1980.

\bibitem{z-lop-95}
G{\"u}nter~M. Ziegler.
\newblock {\em Lectures on polytopes}.
\newblock Springer-Verlag, New York, 1995.

\end{thebibliography}
\end{document}